\newtheorem{theorem}{Theorem}[section]
\newtheorem{definition}[theorem]{Definition}
\newtheorem{example}[theorem]{Example}
\newtheorem{proposition}[theorem]{Proposition}
\newtheorem{remark}[theorem]{Remark}
\newenvironment{proof}{\paragraph{\normalfont \textit{Proof:}}}{\hfill$\square$}
\begin{document}

\title{A classifying groupoid for compact Hausdorff locales}

\author{Simon Henry, Christopher Townsend}
% \date{May 2023}
\maketitle

\begin{abstract}
We construct a localic groupoid $\mathbb{G}_{KH}$ such that for any locale $X$ the category of compact Hausdorff locales in the topos of sheaves over $X$ is equivalent to a category whose objects are principal $\mathbb{G}_{KH}$-bundles over $X$ and whose morphisms are $\mathbb{S}$-homotopies (where $\mathbb{S}$ is the Sierpi\'{n}ski locale).

This result can be intuitively viewed as the compact Hausdorff dual of the well known result from topos theory that there is an object classifier. 
\end{abstract}

\section{Introduction}
The paper \cite{BDesc} proves that for any \'{e}tale-complete localic groupoid $\mathbb{G}$, if we consider the topos $B(\mathbb{G})$ of $\mathbb{G}$-equivariant sheaves, then  geometric morphisms $Sh(X) \rTo B(\mathbb{G})$ are in bijection with principal $\mathbb{G}$-bundles over $X$. By the famous Joyal and Tierney result (\cite{JoyT}) we know that every bounded topos is of the form $B(\mathbb{G})$ for some \'{e}tale-complete localic groupoid $\mathbb{G}$ and so since there is an object classifier this means we can find a localic groupoid $\mathbb{G}$ and a bijection between principal $\mathbb{G}$-bundles over $X$ and $Sh(X)$ for any locale $X$. Recalling that $Sh(X)$ can be identified with discrete locales internal to $Sh(X)$ we can therefore identify discrete locales with principal $\mathbb{G}$-bundles for some localic groupoid $\mathbb{G}$. The purpose of this paper is to prove a compact Hausdorff dual for this observation. Specifically, we construct a localic groupoid $\mathbb{G}_{KH}$ and identify, for any locale $X$, principal $\mathbb{G}_{KH}$-bundles over $X$ with compact Hausdorff locales internal to $Sh(X)$.

In outline the proof proceeds as follows. Firstly, by showing  (Proposition \ref{prop:Proper_descend}) that proper maps of locales descend along effective descent morphism and that compact Hausdorff locales can be characterised as those locales that have proper diagonals, we see that $X \mapsto \mathbf{KHaus}_X$ is a stack. Then we recall two conditions that are sufficient to show that a stack on the category of locales is geometric; that is, equivalent to $X \mapsto Prin_{\mathbb{G}}(X)$ for some localic groupoid $\mathbb{G}$. The two conditions are similar to the familiar ones appearing in the definition of Artin stacks or Deligne-Munford stacks from algebraic geometry. The first condition is that there is a locale $G_0$ and a canonical object at stage $G_0$ such that every other object is covered by the canonical object via an effective descent morphism. The second condition is that there is an object $G_1$ that, in a sense, classifies isomorphisms between points of the stacks. 

The proof is then completed by checking the two conditions for the case $X  \mapsto \mathbf{KHaus}_X$. Checking the first requires us to recall that every compact Hausdorff locale is the completion of a normal distributive lattice (NDL). We use this to show how any compact Hausdorff locale can be pulled back (via a cover) to a stage at which it is  the completion of the pullback of the generic NDL. (This generic NDL exists because the theory of normal distributive lattices is geometric, and we are able to obtain the compact Hausdorff locale needed as the completion process commutes with pullback functors determined by geometric morphisms.) Constructing the ``isomorphisms classifier'' $G_1$ required for the second condition is reasonably straightforward using the generic compact Hausdorff locale defined via the completion of the generic normal distributive lattice. This is because compact Hausdorff locales are locally compact and so are exponentiable.

We finish by including some comments on how to extend the result to arbitrary morphisms between compact Hausdorff locales, showing how to represent these as $\mathbb{S}$-homotopies using the main result of \cite{HenryTow}. 
\section{Background and preliminary material}

\subsection{Locales}
For background on locales consult Part C of \cite{Elephant}. We will pass through the equivalence $\mathbf{Loc}/X \simeq \mathbf{Loc}_{Sh(X)}$, e.g. C1.6.3 of \cite{Elephant}, without comment. We assume familiarity with the notion of proper and open locale map; a locale $X$ is discrete(compact Hausdorff) if and only if all finite (including nullary) diagonals are open(proper).

Open and proper locale maps are pullback stable, and if they are also surjections then they are effective descent morphisms in the category of locales $\mathbf{Loc}$. A locale map $f: X \rTo Y$ is \emph{of effective descent} if the pullback functor $f^*$ is monadic; equivalently, the canonical map $\mathbf{Loc}/Y \rightarrow [\mathbb{X}_f, \mathbf{Loc}]$ is an equivalence, where $\mathbb{X}_f$ is the localic groupoid determined by the kernel pair of $f$ (we use the notation $[\mathbb{G},\mathbf{Loc}]$ for the category of $\mathbb{G}$-objects for any localic groupoid $\mathbb{G}$). Effective descent morphisms are pullback stable, essentially because monadicity criteria are pullback stable.

Open and proper maps can be isolated using lower ($P^L$) and upper ($P^U$) power locale constructions; a locale map $g: Z \rTo X$ is open if and only if $P^L_X(Z_g)$ has a top element (and is proper if and only if $P^U_X(Z_g)$ has a bottom). See Theorem 4.9 (Theorem 5.10) of \cite{viclocnotpointless}. Here we are following the notation that if $f:X \rTo Y$ is a locale map then we write $X_f$ when considering it as an object of the slice $\mathbf{Loc}/Y$.

\subsection{Principal bundles}
Let $\mathbb{G} = (d_0,d_1:G_1 \pile{\rTo \\ \rTo} G_0,m,s,i)$ be a localic groupoid. Then a \emph{principal $\mathbb{G}$-bundle over $X$} is a $\mathbb{G}$-object $(P_x,a:G_1 \times_{G_0} P \rTo P)$ together with a $\mathbb{G}$-invariant morphism $p:P \rTo X$ such that  $(a,\pi_2):G_1 \times_{G_0} P \rTo P \times_X P$ is an isomorphism and $p$ is an effective descent morphism. 

If $(P_x, a)$ is a principal bundle then the inverse $(a,\pi_2)^{-1}:P \times_X P \rTo G_1 \times_{G_0} P$ is necessarily of the form $(\psi^P,\pi_2)$ where $\psi^P: P \times_X P \rTo G_1$ satisfies the usual cocycle condition. Given an effective descent morphism $p:P \rTo X$ a \emph{cocycle} for $P_p$ with values in $\mathbb{G}$ is an internal functor $\mathbb{P}_p \rTo \mathbb{G}$. So every principal bundle $(P_x,a)$ gives rise to a cocycle with the internal functor determined by $(\psi^P,x)$. In the other direction every cocycle $F :\mathbb{P}_p \rTo \mathbb{G}$ for any effective descent morphism $p:P \rTo X$ gives rise to a principal bundle $(Q_y,b)$; it is unique up to isomorphism with the property that $F$ factors via $(\psi^Q,y)$. The object $Q$ is the quotient on $d_0^*(P_{x}) = G_1 \times_{G_0} P$ found by associating each $(gF_1(p_1,p_2),p_2)$ and $(g,p_1)$. Of course, locales don't have points, but this association can be expressed with a coequalizer diagram; the coequalizer diagram is pullback stable, allowing a $\mathbb{G}$-action to be defined on $Q$.   

If there are two functors $F^1,F^2:\mathbb{P}_p \rTo \mathbb{G}$, giving rise to two principal bundles $Q_1,Q_2$, then internal natural transformations from $F^1$ to $F^2$ are in bijection with $\mathbb{G}$-homomorphisms from $Q_1$ to $Q_2$ over $X$. To see this, first note that because $p$ is an effective descent morphism $[\mathbb{P}_p,\mathbf{Loc}] \simeq \mathbf{Loc}/X$. Therefore morphisms $Q_1 \rTo Q_2$ over $X$ are in bijection with $\mathbb{P}_p$-object homomorphisms $d_0^*P_{F^1_0} \rTo d_0^*P_{F^2_0}$. This specialises and $\mathbb{G}$-homomorphisms from $Q_1$ to $Q_2$ are in bijection with morphisms $d_0^*P_{F^1_0} \rTo d_0^*P_{F^2_0}$ that are both $\mathbb{G}$ and $\mathbb{P}_p$-homomorphisms. Any such morphism must be of the form $(\bar{\alpha},\pi_2)$ as it is over $P$ and moreover $\bar{\alpha}: G_1 \times_{G_0} P \rTo G_1$ must be of the form $(g,p) \mapsto g \alpha(p)$ for some uniquely determined $\alpha:P \rTo G_1$ as $(\bar{\alpha},\pi_2)$ is a $\mathbb{G}$-homomorphism. Then it is easy to see that $(\bar{\alpha},\pi_2)$ is a $\mathbb{P}_p$-homomorphism if and only if $\alpha$ is an internal natural transformation.

For any locale $X$, $Prin_{\mathbb{G}}(X)$ is the category whose objects are principal $\mathbb{G}$-bundles over $X$ and whose morphisms are $\mathbb{G}$-object homomorphisms over $X$. Given that natural transformations between groupoids are always invertible, we know by the correspondence between internal natural transformations and principal bundle maps just outlined that all the morphisms of $Prin_{\mathbb{G}}(X)$ are isomorphisms.  

Finally, $Prin_{\mathbb{G}}(X)$ is pseudo-functorial in $X$; for any locale map $f: Y\rTo X$, pullback along $f$ preserves the property of being a principal $\mathbb{G}$-bundle. 

\subsection{Geometric Theories and Normal Distributive Lattices}
We will also consider geometric theories and their classifying toposes, e.g. Part B4.2 of \cite{Elephant}. For any such theory $T$ and over any base topos $\mathcal{S}$ with a natural numbers object, there is a topos $[T]$ bounded over $\mathcal{S}$ and an equivalence, $T(\mathcal{E}) \simeq \frak{BTop}/\mathcal{S}(\mathcal{E},[T])$ natural in each topos $\mathcal{E}$ bounded over $\mathcal{S}$.

We are going to be interested in the geometric theory of normal distributive lattices $NDL$; these are distributive lattices $D$ with the additional property that for any $a,b \in D$ if $a \vee b = 1$ then there exists $a',b' \in D$ such that $a' \wedge b\ = 0$ and $a' \vee b = 1 = a \vee b'$. We write $\mathbf{NDL}$ for the category of normal distributive lattices with lattice homomorphisms as morphisms. We will not need to be explicit about the underlying lattice theory here as we shall instead quote: 
\begin{proposition}\label{c}
(i) There is an essentially surjective functor $c:\mathbf{NDL} \rTo \mathbf{KHaus}^{op}$ to the opposite of the category of compact Hausdorff locales.

(ii) The functor (i) can be constructed relative to any topos and is stable under geometric morphisms. That is, if $f: \mathcal{F} \rTo \mathcal{E}$ is a geometric morphism and $D$ is a normal distributive lattice in $\mathcal{E}$, then $f^*(c_{\mathcal{E}}(D)) \cong c_{\mathcal{F}}(f^*D)$ where we are using $f^*$ both for the locale pullback functor $\mathbf{Loc}_{\mathcal{E}} \rTo \mathbf{Loc}_{\mathcal{F}}$ and the inverse image of the geometric morphism $f$.  
\end{proposition}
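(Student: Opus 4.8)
The plan is to establish (i) by constructing $c$ via the ideal completion / prime spectrum construction, and (ii) by verifying that every step of that construction is described by geometric data.

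For (i), given a normal distributive lattice $D$, I would form the frame $\mathrm{Idl}(D)$ of ideals of $D$; this is the free frame on $D$ qua meet-semilattice subject to the relation that finite joins in $D$ are preserved, equivalently the frame presented by generators $\lceil a \rceil$ for $a \in D$ and relations $\lceil a \wedge b \rceil = \lceil a \rceil \wedge \lceil b \rceil$, $\lceil a \vee b \rceil = \lceil a \rceil \vee \lceil b \rceil$, $\lceil 1 \rceil = 1$, $\lceil 0 \rceil = 0$. Set $c(D)$ to be the locale with this frame of opens. The normality hypothesis is exactly what forces $c(D)$ to be compact (nullary diagonal proper, i.e. $1$ is not a directed join of strictly smaller ideals — this uses that $1 \in D$ is compact in $\mathrm{Idl}(D)$) and regular in the strong sense needed: normality lets one interpolate, so that for each $a$ one has $\lceil a \rceil = \bigvee \{ \lceil b \rceil : b \prec a\}$ where $b \prec a$ means there is $c$ with $b \wedge c = 0$, $c \vee a = 1$; regularity plus compactness gives compact Hausdorff. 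Essential surjectivity is the classical fact that every compact regular frame arises this way: take $D$ to be the set of opens $U$ admitting $V$ with $U \vee V = 1$ and $U \wedge W = 0 \Rightarrow$ ... — more cleanly, take $D = \{ U : U \prec\!\!\prec 1\}$ closed under the well-inside relation, or simply the whole frame presented as an NDL via $a \mapsto a$; the point is that a compact regular frame is the ideal completion of any sublattice that is "well-inside"-dense and closed under the lattice operations, and the frame itself (as a distributive lattice, which is automatically normal when the locale is compact Hausdorff) does the job. Morphisms go the right way because a lattice homomorphism $D \to D'$ extends uniquely to a frame homomorphism $\mathrm{Idl}(D) \to \mathrm{Idl}(D')$, hence a locale map $c(D') \to c(D)$.

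For (ii), the key observation is that the construction $D \mapsto \mathrm{Idl}(D)$ is given by a geometric presentation: the frame of $c(D)$ is presented by generators and relations that are geometric formulae in the language of distributive lattices, and frame presentations of this kind are preserved by inverse image functors of geometric morphisms, since $f^*$ preserves finite limits and arbitrary colimits and hence preserves the coequalizer/quotient construction defining a presented frame. Concretely, $c_{\mathcal{E}}(D)$ is the locale in $\mathcal{E}$ whose frame is $\mathcal{O}(c_{\mathcal{E}}(D)) = \mathrm{Frm}\langle \lceil a\rceil_{a \in D} \mid \text{(lattice relations)}\rangle$, and applying $f^*$ to this presentation yields $\mathrm{Frm}\langle \lceil a\rceil_{a \in f^*D} \mid \text{(lattice relations)}\rangle = \mathcal{O}(c_{\mathcal{F}}(f^*D))$, because $f^*$ of a free frame on a set is the free frame on the image set and $f^*$ preserves the defining colimits. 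One must check that $f^*D$ is again a normal distributive lattice — this holds because the NDL axioms are geometric (the normality axiom is a geometric sequent, being a $\forall$ over an implication whose hypothesis and conclusion are geometric and whose conclusion's existential is geometric), so they transfer along $f^*$; and that the isomorphism $f^*\mathcal{O}(c_{\mathcal{E}}(D)) \cong \mathcal{O}(c_{\mathcal{F}}(f^*D))$ is natural, which is automatic from the universal property of the presentation.

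The main obstacle I expect is purely bookkeeping rather than conceptual: writing down the frame presentation of $c(D)$ cleanly enough that "geometricity" of the construction is manifestly visible, and being careful that the compact-Hausdorff property (proper diagonal, in the sense the paper uses) really is a consequence of normality — this is where one genuinely needs the interpolation supplied by the NDL axiom rather than just distributivity, and it is the one place a reader might want the lattice-theoretic details spelled out. Since the paper explicitly says it will not be explicit about the underlying lattice theory and instead \emph{quotes} this proposition, I would cite the standard source (compact regular locales as ideal completions of normal distributive lattices — e.g. the relevant sections of \cite{Elephant} or Johnstone's \emph{Stone Spaces}) for (i), and for (ii) give the short argument above that frame presentations by geometric data are stable under $f^*$, which is the only genuinely new content.
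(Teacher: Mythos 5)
There is a genuine gap in the core construction of (i). You take $c(D)$ to be the locale presented by generators $\lceil a\rceil$, $a\in D$, subject only to the lattice relations, i.e.\ the frame $\mathrm{Idl}(D)$ of ideals of $D$ (the coherent spectrum of $D$). That locale is compact for \emph{any} distributive lattice, but it is essentially never regular, and your claim that normality makes the identity $\lceil a\rceil=\bigvee\{\lceil b\rceil : b\prec a\}$ hold in $\mathrm{Idl}(D)$ is false: the right-hand side is the ideal generated by $\{b : b\prec a\}$, and $a$ belongs to it only when $a\prec a$, i.e.\ when $a$ is complemented; so $\mathrm{Idl}(D)$ is regular only when $D$ is (generalized) Boolean. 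A concrete counterexample is the three-element chain $D=\{0,a,1\}$, which is a normal distributive lattice, while $\mathrm{Idl}(D)\cong D$ is a three-element frame whose locale is not Hausdorff; similarly, for $X$ compact Hausdorff, $\mathrm{Idl}(\mathcal{O}(X))$ is a large coherent frame, not $\mathcal{O}(X)$, so your essential-surjectivity argument (``the frame itself does the job'') also fails for this $c$. The correct construction --- the one underlying the sources the paper quotes (\cite{HenryTow}, \cite{spit}) --- \emph{imposes} the regularity relation $\lceil a\rceil\leq\bigvee\{\lceil b\rceil : b\prec a\}$ as part of the presentation (equivalently, takes rounded ideals with respect to the well-inside relation $\prec$); normality is what guarantees that $\prec$ interpolates, so that the presented locale is compact regular and the completion of $\mathcal{O}(X)$ returns $X$.

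Part (ii) is the right idea (the construction is given by a presentation whose data are geometric, and such presentations are stable under inverse image), but your justification is misstated: $f^*$ does not preserve frames, free frames, or the colimits computing them, since inverse image functors do not preserve infinite joins. The correct statement, which is what \cite{spit} establishes, is that the locale in $\mathcal{F}$ presented by the $f^*$-image of the generators-and-relations data coincides with the pullback along $f$ of the locale presented in $\mathcal{E}$; this is a theorem about geometricity of frame presentations (going back to Joyal--Tierney), not an application of $f^*$ to the frame itself. Your observation that the NDL axioms are geometric, so $f^*D$ is again an NDL, is correct and needed. Note also that the paper deliberately does not reprove any of this: its ``proof'' is a citation to \cite{HenryTow} and \cite{spit} together with the remark that $X\cong c(\mathcal{O}(X))$ gives essential surjectivity; if you do sketch the construction yourself, the missing regularity relation is precisely the point that must not be elided.
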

\begin{proof}
Part (i) is covered in \cite{HenryTow}, but is also essentially covered in \cite{spit}. Any compact Hausdorff locale $X$ is isomorphic to $c(\mathcal{O}(X))$, where $\mathcal{O}(X)$ is the frame of opens of $X$ (which is a normal distributive lattice if $X$ is compact Hausdorff). 

Consult \cite{spit} for (ii).    
\end{proof}
\section{Stacks on $\mathbf{Loc}$}
In this section we recall the notion of stack on the category of locales, where our notion of cover is a single morphism consisting of an effective descent morphism. We also give some examples of localic stacks. 
\begin{definition}
Given a pseudo-functor $M:\mathbf{Loc}^{op} \rTo \mathfrak{CAT}$ for any locale map $f:Y \rTo X$ we define the category $Des(M,f)$ of \emph{descent data for $f$ in $\mathcal{C}$} as follows. The objects of $Des(M,f)$ are pairs $(A, \theta_A):M(\pi_1)(A) \rTo M(\pi_2)(A))$, where $A$ is an object of $M(Y)$, $\theta_A$ satisfies the cocycle conditions for $f$ and $\pi_1, \pi_2$ are the two projections $X \times_Y X \pile{\rTo \\ \rTo } X$. Morphisms $(A,\theta_A) \rTo (B, \theta_B)$ consists of maps $\psi: A \rTo B$ compatible with the $\theta$s; that is, $[M(\pi_2)(\psi)] \theta_A = \theta_B [M(\pi_1)(\psi)]$.
\end{definition}
See Definition B1.5.1 of \cite{Elephant} for background and the preamble to Proposition B1.5.5 for the case of a single cover. Of course the morphisms $\theta_A$ are all isomorphisms: their inverses are determined by $M(\tau)(\theta_A)$ where $\tau:  Y \times_X Y \rTo Y \times_X Y$ is the twist isomorphism. There is a canonical functor $M(X)\rTo Des(M,f)$ for any $f:Y \rTo X$: send any object $A$ to $M(f)(A)$. 
\begin{example}
Consider $\mathbb{LOC}: \mathbf{Loc}^{op} \rTo \mathfrak{CAT}$, the pullback pseudo-functor on $\mathbf{Loc}$ which sends any locale $X$ to the slice category $\mathbf{Loc}/X$ and any locale map $f$ to pullback along $f$; e.g. Example B1.2.2(c) of \cite{Elephant}. Then $Des(\mathbb{LOC},f:Y \rTo X)$ is isomorphic to $[\mathbb{Y}_f,\mathbf{Loc}]$. 
\end{example}
\begin{definition}
A pseudo-functor $M:\mathbf{Loc}^{op} \rTo \frak{CAT}$ is a \emph{stack} provided for any effective descent morphism $f:X \rTo Y$, $M(Y)$ is equivalent, via the canonical functor, to $Des(M,f)$.
\end{definition}
\begin{example}
The (contravariant) pullback pseudo-functor $\mathbb{LOC}(X) = \mathbf{Loc}/X$ is a stack, essentially by definition of effective descent morphism. Let $f: Y \rTo X$ be a locale map. The definition of $f^*$ being monadic is that the canonical functor $f^*:\mathbf{Loc}/X \rTo (\mathbf{Loc}/Y)^{\mathbb{T}_f}$ is an equivalence, where $\mathbb{T}_f$ is the monad on $\mathbf{Loc}/Y$ determined by the pullback adjunction $\Sigma_f \dashv f^*$. But the algebras of $\mathbb{T}_f$ are exactly the $\mathbb{Y}_f$-objects for the internal groupoid $\mathbb{Y}_f$ which we have already seen can be identified with $Des(\mathbb{LOC},f)$.   
\end{example}

\begin{example}
Given $\mathbb{G}$ a localic groupoid, then the contravariant pseudo-functor $X \mapsto Prin_{\mathbb{G}}(X)$ is a stack. This is proved for example in \cite{BDesc} (Theorem 4.11) in the case where the notion of cover used is that of an open surjection; but a proof using effective descent morphism instead is exactly the same.
\end{example}

The identification of $\mathbf{Loc}/X$ with $Des(\mathbb{LOC},f)$ for $f$ an effective descent morphism also allows for a short proof that open and proper maps descend:
\begin{proposition} \label{prop:Proper_descend}
Given a localic effective decent morphism $f: Y \rTo X$, if $g: A_a \rTo B_b$ is a locale map over $X$ such that $f^*(g)$ is open (resp. proper) over $Y$ then $g$ is open (resp. proper). 
\end{proposition}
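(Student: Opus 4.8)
The plan is to reduce the statement to the characterisation of open and proper maps via the lower and upper power locale constructions, recalled at the end of Section 2.1, combined with the identification of $\mathbf{Loc}/X$ with descent data along $f$. Since $f$ is of effective descent, we have an equivalence $\mathbf{Loc}/X \simeq Des(\mathbb{LOC}, f) = [\mathbb{Y}_f, \mathbf{Loc}]$, so the object $g : A_a \to B_b$ of $\mathbf{Loc}/X$ corresponds to a morphism of $\mathbb{Y}_f$-objects, and knowing a property of $f^*(g)$ in $\mathbf{Loc}/Y$ should, via this equivalence, be transportable back to $X$ provided the property in question is preserved and reflected appropriately along the pullback. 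The cleanest route is to work with $g$ as a map in $\mathbf{Loc}/B$: write $Z = A$ viewed over $B$ via $g$, so that $g$ is open iff $P^L_B(Z_g)$ has a top element and proper iff $P^U_B(Z_g)$ has a bottom element.

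First I would fix the pullback square: let $f_B : B \times_X Y \to B$ be the pullback of $f$ along $b : B \to X$; this is again an effective descent morphism since effective descent morphisms are pullback stable. The map $f^*(g)$ over $Y$, when regarded appropriately, is the pullback of $g : A \to B$ along $f_B$, i.e. it is $f_B^*(Z_g)$ as an object of $\mathbf{Loc}/(B\times_X Y)$. Next I would invoke the fact that the power locale constructions $P^L$ and $P^U$ commute with pullback, i.e. $f_B^*\bigl(P^L_B(Z_g)\bigr) \cong P^L_{B\times_X Y}\bigl(f_B^*(Z_g)\bigr)$ and similarly for $P^U$ — this is the geometricity/pullback-stability of the power locale functors (Vickers, as cited). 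Then the hypothesis that $f^*(g)$ is open over $Y$ says precisely that $P^L_{B\times_X Y}(f_B^*(Z_g))$ has a top element, hence that $f_B^*(P^L_B(Z_g))$ has a top element.

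The crux is then to descend the statement ``$P^L_B(Z_g)$ has a top element'' from $B \times_X Y$ back to $B$ along the effective descent morphism $f_B$. Having a top element for an object $W$ of $\mathbf{Loc}/B$ means exactly that the unique map $W \to B$ has a section; equivalently the top-element structure is an extra piece of data on $W$, and what must be checked is that this data descends. Here I would use that $f_B$ is of effective descent, so $\mathbf{Loc}/B \simeq Des(\mathbb{LOC}, f_B)$: the object $W = P^L_B(Z_g)$ already lives over $B$ (not merely as descent data), so it is enough to observe that the section $B\times_X Y \to f_B^*W$ witnessing the top element, together with its pullbacks along the two projections of the kernel pair, automatically satisfies the cocycle condition — because sections of a pulled-back map are themselves governed by the same descent equivalence, and the descent data on $f_B^*W$ is the canonical one coming from $W$. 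Thus the section descends to a section $B \to W$, giving $P^L_B(Z_g)$ a top element, i.e. $g$ is open. The proper case is identical with $P^U$ in place of $P^L$ and ``bottom element'' in place of ``top element''.

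The main obstacle, and the only point needing care, is the descent of the ``top element'' (resp. ``bottom element'') structure: one must be sure that having a top element is expressed by a morphism (a section) in a slice category in a way compatible with the descent equivalence, rather than by some property that is merely pullback-stable but not obviously pullback-reflecting. Once it is phrased as ``the structure map has a section'', this is immediate from $\mathbf{Loc}/B \simeq \mathbf{Loc}/(B\times_X Y)^{\mathbb{T}_{f_B}}$ together with the fact that the descent data carried by a pulled-back object is the canonical one, so no genuine cocycle verification is needed. I expect the whole argument to be short; the only slightly delicate bookkeeping is keeping straight the two base changes (first along $b : B \to X$ to reduce to working over $B$, then along $f_B$ to descend), and confirming that $f^*(g)$ being open ``over $Y$'' unwinds, after the first base change, to the statement about $f_B^*(Z_g)$ over $B \times_X Y$.
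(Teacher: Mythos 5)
Your overall route is the paper's: base change along $b:B\to X$ to reduce to the base, characterise openness/properness via the existence of a top (resp.\ bottom) element of the relevant power locale, use pullback-stability of $P^L$/$P^U$, and then descend that element along the effective descent morphism via $\mathbf{Loc}/B \simeq Des(\mathbb{LOC},f_B)$. However, the crux of your argument contains a genuine gap: you assert that the section witnessing the top element descends ``automatically'', that ``no genuine cocycle verification is needed'' because the descent data on a pulled-back object is the canonical one. That principle is false: the property ``the structure map has a section'' is \emph{not} reflected along effective descent morphisms. For instance, if $f:Y\to X$ is a nontrivial double cover (an open surjection, hence of effective descent) and $W=Y$ over $X$, then $f^*W\to Y$ has a section while $W\to X$ has none; the obstruction is precisely that an arbitrary section of $f^*W$ need not have equal pullbacks along the two projections of the kernel pair, i.e.\ need not be a morphism of descent data from the terminal object to $(f^*W,\mathrm{can})$.

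What rescues the argument — and this is exactly the paper's key observation — is the special nature of the section in question: it is a \emph{top} element. Since pullback preserves the order enrichment of $\mathbf{Loc}$, the two pullbacks of $\top:1\to P^L(f_B^*(Z_g))$ along the kernel-pair projections are again top elements of the same locale, hence coincide by uniqueness of top elements. This equality \emph{is} the required compatibility with the descent data, so $\top$ is a morphism in $Des(\mathbb{LOC},f_B)$ and therefore corresponds, under the descent equivalence, to a (top) element of $P^L_B(Z_g)$, giving openness of $g$; the proper case is order dual with $P^U$ and bottom elements. So your proof becomes correct once you replace the ``automatic descent of sections'' claim by this uniqueness-of-top-elements argument; as it stands, that step is the missing idea rather than a formality.
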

\begin{proof}
By change of base, since the pullback of $f$ along $b:B\rTo X$ is still an effective descent morphism, we can assume that $B=X$ and are left checking that if $P^L_Y(f^*A)$ has a top element then so does $P^L_X(A_a)$. But the pullbacks of the top element $\top: 1 \rTo P^L_Y(f^*A_a)$ along both $\pi_1: Y \times_X Y \rTo Y$ and $\pi_2: Y \times_X Y \rTo Y$ are again top elements (recall that $\mathbf{Loc}$ is order enriched cartesian; that is, pullback presereves the order enrichment). Both pullbacks must therefore be the same by uniqueness of top elements. Therefore $\top$ is a morphism of $Des(\mathbb{LOC},f)$ and corresponds to a top elment for  $P^L_X(A_a)$ via $\mathbf{Loc}/X \simeq Des(\mathbb{LOC},f)$.

The conclusion for proper maps is order dual. 
\end{proof}

\begin{example}
For any locale $X$ let $\mathbf{KHLoc}_X$ be the full subcategory of $\mathbf{Loc}_{Sh(X)}$ consisting of compact Hausdorff locales. As pullback preserves proper maps the assignment $X \mapsto \mathbf{KHLoc}_X$ (and morphisms mapping to pullback) is a pseudo-functor $\mathbf{Loc}^{op} \rTo \frak{CAT}$ which is a sub-pseudo-functor of the previous example in an obvious manner. To prove that it is a stack, we just need to check that proper maps descend along effective descent morphisms; this has been covered in the previous proposition.
\end{example}

\begin{example}
The pseudo-functor $X \mapsto Sh(X)$ is a stack. This follows as in the previous example, but with open maps in place of proper maps; recall that for any locale $X$, $Sh(X) \simeq \mathbf{LH}/X \simeq \mathbf{Dis}_X$. That is, the category of sheaves over $X$ is equivalent to the category of discrete locales internal to the topos $Sh(X)$. 
\end{example}

Any stack $M:\mathbf{Loc}^{op} \rTo \frak{CAT}$ gives rise to a stack of groupoids; consider $X \mapsto M^{\cong}(X)$ where $M^{\cong}(X)$ has the same objects as $M(X)$, but has as morphisms only those morphisms of $M(X)$ that are isomorphisms. Proving this requires the simple verification that all the functors involved in the relevant definitions preserve isomorphisms.  

\section{Geometric stacks}
We now provide some conditions for when a stack on $\mathbf{Loc}$ is a stack of principal bundles for some localic groupoid. 
\begin{definition}
A stack $M:\mathbf{Loc}^{op} \rTo \mathfrak{GPD}$ of groupoids is \emph{geometric} if there exists a localic groupoid $\mathbb{G}$ such that $M(X) \simeq Prin_{\mathbb{G}}(X)$ naturally for every locale $X$.
\end{definition}
\begin{proposition}\label{geom}
Let $M:\mathbf{Loc}^{op} \rTo \mathfrak{GPD}$ be a stack of groupoids. Assume that:

(1) there exists a locale $G_0$ and an object $C$ of $M(G_0)$ such that for any $X$ and any $A \in M(X)$ there exists an effective descent morphism $q: Y \rTo X$ and a morphism $x: Y \rTo G_0$ such that $M(q)(A) \cong M(x)(C)$

(2) there is a locale $G_1$ such that,
\begin{eqnarray*}
\mathbf{Loc}(X,G_1) \cong \{ (f,g,\theta) | f,g:X \rTo G_0, \theta: M(f)(C) \rTo^{\cong} M(g)(C) \}
\end{eqnarray*}
naturally, for any other object $X$ of $\mathbf{Loc}$.

Then $M$ is geometric.
\end{proposition}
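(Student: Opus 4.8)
The plan is to build the classifying groupoid $\mathbb{G}$ directly out of the data $(G_0, C, G_1)$. Take $G_0$ as the object-of-objects and $G_1$ as the object-of-arrows. The source and target maps $d_0, d_1 : G_1 \rTo G_0$ are the two projections $(f,g,\theta) \mapsto f$ and $(f,g,\theta) \mapsto g$ coming from the natural bijection in condition (2) applied to $X = G_1$ and the identity map; composition $m: G_1 \times_{G_0} G_1 \rTo G_1$, the identity $s: G_0 \rTo G_1$, and the inverse $i: G_1 \rTo G_1$ are all read off from (2) by composing, respectively inverting, the classified isomorphisms $\theta$, using the Yoneda-style naturality to produce the required locale maps. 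The groupoid axioms then hold because they hold for the classified isomorphisms in every $M(X)$ and the bijections of (2) are natural. This gives a localic groupoid $\mathbb{G}$ and, by construction, a canonical principal-ish datum: $C \in M(G_0)$ together with the action $a: G_1 \times_{G_0} C$-side structure, i.e. $C$ is a $\mathbb{G}$-object in $M$ via the $\theta$ attached to $d_0, d_1$.

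Next I would set up the comparison functor $\Phi_X : Prin_{\mathbb{G}}(X) \rTo M(X)$. Given a principal $\mathbb{G}$-bundle $P$ over $X$ with $p: P \rTo X$ an effective descent morphism, the bundle comes equipped (as recalled in the Principal bundles subsection) with a $\mathbb{G}$-invariant map $P \rTo X$ and a cocycle $\psi^P : P \times_X P \rTo G_1$; composing a classifying map $P \rTo G_0$ (the "point" of $P$, which exists since $P$ carries a $\mathbb{G}$-object structure hence maps to $G_0$) — more precisely, a principal bundle is equivalently a cocycle $\mathbb{P}_p \rTo \mathbb{G}$, i.e. a map $x: P \rTo G_0$ together with $\psi^P: P\times_X P \rTo G_1$ satisfying the cocycle condition. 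Feeding $x$ into condition (1)'s canonical object gives $M(x)(C) \in M(P)$, and $\psi^P$ together with (2) supplies an isomorphism $M(\pi_1)(M(x)(C)) \cong M(\pi_2)(M(x)(C))$ satisfying the descent cocycle condition; since $p$ is effective descent and $M$ is a stack, this descends to a well-defined object $\Phi_X(P) \in M(X)$. Functoriality in $P$ uses the bijection between bundle maps and internal natural transformations (recalled in the excerpt) together with the functoriality half of condition (2).

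For the inverse direction, given $A \in M(X)$, condition (1) produces an effective descent cover $q: Y \rTo X$ and $x: Y \rTo G_0$ with $M(q)(A) \cong M(x)(C)$; pulling the isomorphism around the kernel pair $Y \times_X Y$ and using (2) yields a cocycle $\mathbb{Y}_q \rTo \mathbb{G}$, hence a principal $\mathbb{G}$-bundle $\Psi_X(A)$ over $X$. One checks $\Psi_X$ is well-defined up to isomorphism (independence of the chosen cover, via a common refinement argument and the stack property), that $\Phi_X \Psi_X \cong \mathrm{id}$ and $\Psi_X \Phi_X \cong \mathrm{id}$ — both reductions to the cover level where the two constructions are manifestly mutually inverse — and that everything is natural in $X$, which is where the naturality clauses in (1) and (2) and pseudo-functoriality of $Prin_{\mathbb{G}}$ are used.

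The main obstacle I expect is the bookkeeping around \emph{well-definedness and naturality of $\Psi_X$}: condition (1) only asserts the existence of a cover, not a canonical one, so one must show the resulting bundle is independent of the choice, compatible with pullback along arbitrary $f: X' \rTo X$ (which may force passing to a common refinement of two unrelated covers), and that the two functors assemble into pseudo-natural equivalences rather than merely objectwise ones. This is exactly the standard "a stack with an atlas and an arrow-object is presentable by a groupoid" argument from algebraic geometry, and the work is in verifying that the localic/effective-descent setting supports each step — in particular that $M$ being a stack for effective descent morphisms lets the descent arguments go through verbatim, and that the cocycle/principal-bundle dictionary of Section 2 matches the descent-data dictionary $Des(M,q) \simeq M(X)$.
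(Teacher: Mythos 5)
Your construction of the groupoid $\mathbb{G}$ from condition (2), of the comparison functor $Prin_{\mathbb{G}}(X)\rTo M(X)$ (cocycle $\mapsto$ descent datum on $M(x)(C)$ $\mapsto$ descended object, using the stack property), and of a bundle from an object of $M(X)$ via condition (1) are all exactly the paper's constructions, so in substance you are on the paper's route. Where you diverge is in how the equivalence is concluded: you propose building an explicit quasi-inverse $\Psi_X$ and verifying $\Phi_X\Psi_X\cong\mathrm{id}$, $\Psi_X\Phi_X\cong\mathrm{id}$, and you correctly identify the resulting obstacle --- well-definedness of $\Psi_X$ independently of the cover supplied by (1), its action on morphisms, and pseudo-naturality --- but you leave it unresolved. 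The paper sidesteps precisely this: it never makes $\Psi$ into a functor. It shows the single functor $Prin_{\mathbb{G}}(X)\rTo M(X)$ is fully faithful, using the bijection (recalled in the principal-bundles subsection) between internal natural transformations $\alpha:P\rTo G_1$ and bundle maps over $X$, matched via (2) with morphisms in $Des(M,p)$; and then uses (1) only for essential surjectivity, checking $A\cong A^{Q^A}$ by comparing descent data over the given cover. The comparison of objects built from different covers is handled by a short ``refinement'' lemma: if $r:P\rTo Q$ is an effective descent morphism then the pulled-back cocycle yields the same object of $M(X)$, which uses that the composite $qr$ is again an effective descent morphism (this closure under composition is a real input, covered by the descent literature cited in the paper, and is also what your ``common refinement'' step silently relies on). So your outline is workable, but as written the heaviest part is exactly the bookkeeping you defer; adopting the fully-faithful-plus-essentially-surjective formulation eliminates the need to prove $\Psi_X$ is well defined or natural at all, which is what makes the paper's proof close.
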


\begin{proof}
Certainly $G_0$ and $G_1$ must determine a groupoid $\mathbb{G}$; for example, the identity $G_1 \rTo G_1$ determines via (2) two maps, $d_0:G_1 \rTo G_0$ and $d_1:G_1 \rTo G_0$ and $\theta:M(d_0)(C) \rTo M(d_1)(C)$. The maps $d_0$ and $d_1$ are the domain and codomain maps, and the image of $(d_1,d_0,\theta^{-1})$ under the isomorphism of (2) determines the inverse map $i:G_1 \rTo G_1$. Note that therefore by naturality of (1) the image of any $\psi: X \rTo G_1$ under the isomorphism of (1) is necessarily of the form $(d_0\psi, d_1 \psi, \theta)$. For multiplication, by definition of the pullback $G_1 \times_{G_0} G_1$ (i.e. pull $d_1$ back along $d_0$) and using (2), morphisms $\phi: X \rTo G_1 \times_{G_0} G_1$ are in bijection with 5-tuples $(d_0 \pi_1 \phi , d_1 \pi_1 \phi = d_0 \pi_2 \phi, d_1 \pi_2 \phi, \theta_1, \theta_2)$. Therefore any such $\phi$ gives rise to a morphism $X \rTo G_1$ as the image under the isomorphism of (2) of 
\begin{eqnarray*}
M(d_0 \pi_1 \phi)(C) \cong M(\phi)M(\pi_1)M(d_0)(C) \rTo^{\theta_1} M(\phi)M(\pi_1)M(d_1)(C) \cong \\
M(\phi)M(\pi_2)M(d_0)(C) \rTo^{\theta_2} M(\phi)M(\pi_2)M(d_1)(C) \cong M(d_1 \pi_2 \phi)(C) \text{.}
\end{eqnarray*}
Define $m: G_1 \times_{G_0} G_1 \rTo G_1$ via $\phi = Id_{G_1 \times_{G_0} G_1}$. The unit map $s :G_0 \rTo G_1$ is the inverse image of the triple $(Id_{G_0},Id_{G_0},Id_C)$ under (2). 

Given a principal $\mathbb{G}$-bundle $(Q_x,a)$ over $X$ (say via $q:Q \rTo X$), there is a cocycle $(\psi^Q,x):\mathbb{Q}_q \rTo \mathbb{G}$. By applying (2) at $Q \times_X Q \rTo G_1$ and exploiting the naturality of (2) we obtain an object, $(M(x)(C), \theta_Q)$, of $Des(M,q)$ which, by definition of stack, corresponds uniquely up to isomorphism to an object $A^Q$ of $M(X)$ with the property $M(p)(A^Q) = M(x)(C)$.

Next, let us clarify that if $r:P \rTo Q$ is an effective descent morphism then the cocycle $(\psi^Q (r \times r), xr)$ gives rise to an object $(M(xr)(C), \theta_P)$ of $Des(M,qr)$ which by definition of stack (since the composite $qr$ is an effective descent morphism) corresponds to an object $A^P$ of $M(X)$; but we must have $A^P \cong A^Q$ because $M(qr)(A^P)\cong M(qr)(A^Q)$.

If $n: Q_1 \rTo Q_2 $ is a morphism of principal $\mathbb{G}$-bundles; so, $q_2 n=q_1$ where $q_i:Q_i \rTo X$, then by considering $P_p$, the pullback of $q_2$ along $q_1$ (so $p = q_1 \pi_1 = q_2 \pi_2)$ we obtain an internal natural transformation $\alpha_n: P \rTo G_1 $ from $(\psi^{Q_1}(\pi_1 \times \pi_1),x_1)$ to $(\psi^{Q_2}(\pi_2 \times \pi_2),x_2)$. By (2) this corresponds to a morphism $M(x_1 \pi_1)(C) \rTo M(x_2 \pi_2)(C)$ of $Des(M,p)$ and indeed, by applying (2) in the other direction, we see that such morphisms correspond to internal natural transformations. As $p$ is an effective descent morphism, this morphism of $Des(M,p)$ corresponds to a morphism of $M(X)$, which by the clarification of the previous paragraph must be, up to isomorphism, from $A^{Q_1}$ to $A^{Q_2}$. This determines a functor $Prin_{\mathbb{G}}(X) \rTo M(X)$. 

Since we have commented already how such natural transformations $\alpha$ are in bijection with principal bundle maps from $Q_1$ to $Q_2$ we known that the functor is full and faithful.   

For essential surjectivity say we are given an object $A$ of $M(X)$. Then by (1) there exists $x : Y \rTo G_0$ and $\theta_A: M(\pi_1)[M(x)(C)] \rTo^{\cong} M(\pi_2)[M(x)(C)]$, for some effective descent morphism $p: Y \rTo X$, where $\pi_1,\pi_2:Y \times_X Y \pile{\rTo \\ \rTo } Y$. By (2) there is then a map $\psi^A:Y \times_X Y \rTo G_1$ corresponding to the triple $(x \pi_1, x \pi_2, \theta_A)$. Then by naturality of (2) and the definition of $m$ and $s$ it can be checked that $(\psi^A,x)$ determines a cocycle $\mathbb{Y}_p \rTo \mathbb{G}$ which, by earlier comments, gives rise to a principal bundle $(Q^A_y,a_A)$. Since the cocycle $(\psi^A,x)$ factors through the cocycle determined by $Q^A$ we know that $A \cong A^{Q^A}$ because $M(p)(A) \cong M(x)(C) \cong M(r)M(y)(C)$ where the isomorphisms are in $Des(M,p)$ and where $r$ is the factorisation of $x$ through $y$.
\end{proof}
\begin{remark}
This result is a variant of the well known characterisation of geomtric stack in algebraic geometry (where condition (2) is replaced with the assertion that the diagonal on $M$ is representable). 
\end{remark}

\begin{proposition}\label{main}
The stack $X \mapsto \mathbf{KHLoc}^{\cong}_X$ is geometric
\end{proposition}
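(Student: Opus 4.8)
The plan is to verify the two hypotheses of Proposition \ref{geom} for the stack of groupoids $X \mapsto \mathbf{KHLoc}^{\cong}_X$ (call it $M$); the localic groupoid $\mathbb{G}_{KH}$ is then the one built from $G_0$ and $G_1$ in that proof. For condition (1) I would pass through the classifying topos of normal distributive lattices. Because the theory $NDL$ is geometric, it has over $\mathbf{Set}$ a classifying topos $[NDL]$ carrying a generic normal distributive lattice $D_{\mathrm{gen}}$; and because $[NDL]$ is a Grothendieck topos it admits, by \cite{JoyT}, a localic open surjection $u : Sh(G_0) \rTo [NDL]$ from the topos of sheaves on a locale $G_0$. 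Put $D_0 := u^* D_{\mathrm{gen}}$, a normal distributive lattice in $Sh(G_0)$, and $C := c_{Sh(G_0)}(D_0) \in \mathbf{KHLoc}_{G_0}$.

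Now let $A \in \mathbf{KHLoc}_X$. By Proposition \ref{c}(i), applied inside $Sh(X)$, $A \cong c_{Sh(X)}(\mathcal{O}(A))$, where $\mathcal{O}(A)$ is a normal distributive lattice in $Sh(X)$ and so corresponds to a geometric morphism $a : Sh(X) \rTo [NDL]$ with $a^* D_{\mathrm{gen}} \cong \mathcal{O}(A)$. Form the pullback topos $\mathcal{Y} := Sh(X) \times_{[NDL]} Sh(G_0)$. Since localic geometric morphisms are pullback stable and $Sh(X)$ is localic over $\mathbf{Set}$, $\mathcal{Y}$ is localic over $\mathbf{Set}$, so $\mathcal{Y} \simeq Sh(Y)$ for a locale $Y$; since open surjections are pullback stable, the projection $Sh(Y) \rTo Sh(X)$ is an open surjection of locales $q : Y \rTo X$ and hence of effective descent; and the other projection is a locale map $x : Y \rTo G_0$. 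Commutativity of the pullback square gives $q^* a^* \cong x^* u^*$, hence $q^*\mathcal{O}(A) \cong x^* D_0$ among normal distributive lattices in $Sh(Y)$, and then, using that the frame of a pullback locale is the inverse image of its frame and that $q^* A$ is again compact Hausdorff,
\[
M(q)(A) = q^* A \;\cong\; c_{Sh(Y)}\bigl(q^*\mathcal{O}(A)\bigr) \;\cong\; c_{Sh(Y)}\bigl(x^* D_0\bigr) \;\cong\; x^* c_{Sh(G_0)}(D_0) = M(x)(C),
\]
the first isomorphism by Proposition \ref{c}(i) and the third by Proposition \ref{c}(ii). This gives condition (1).

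For condition (2) I would use that compact Hausdorff locales are locally compact, hence exponentiable, and that local compactness of a locale map is pullback stable. Writing $\pi_1, \pi_2 : G_0 \times G_0 \rTo G_0$ for the projections, put $\Gamma_1 := \pi_1^* C$ and $\Gamma_2 := \pi_2^* C$, compact Hausdorff locales over $G_0 \times G_0$, and form the exponentials $H := \Gamma_2^{\Gamma_1}$ and $H' := \Gamma_1^{\Gamma_2}$ in $\mathbf{Loc}/(G_0 \times G_0)$. By the exponential adjunction, and because exponentials by exponentiable objects are stable under base change, a map $X \rTo H$ over $G_0 \times G_0$ amounts naturally to a map $(f,g) : X \rTo G_0 \times G_0$ together with a locale map $f^* C \rTo g^* C$ over $X$ (the pullbacks of $\Gamma_1, \Gamma_2$ along $(f,g)$ being $f^* C$ and $g^* C$), and likewise for $H'$. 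Let $G_1$ be the equalizer, taken in the complete category $\mathbf{Loc}$, of the two maps $H \times_{G_0 \times G_0} H' \rTo \Gamma_1^{\Gamma_1} \times_{G_0 \times G_0} \Gamma_2^{\Gamma_2}$ sending a parametrised pair $(\phi, \psi)$ to $(\psi\phi, \phi\psi)$ and to $(\mathrm{id}_{\Gamma_1}, \mathrm{id}_{\Gamma_2})$ respectively, the compositions and identities being formed by the usual cartesian-closed manipulations inside $Sh(G_0 \times G_0)$. By the universal property of the equalizer, $\mathbf{Loc}(X, G_1)$ is naturally the set of quadruples $(f, g, \phi, \psi)$ with $\phi : f^* C \rTo g^* C$, $\psi : g^* C \rTo f^* C$, $\psi\phi = \mathrm{id}$ and $\phi\psi = \mathrm{id}$; since a two-sided inverse is unique when it exists, this is in natural bijection with $\{(f, g, \theta) \mid f, g : X \rTo G_0,\ \theta : M(f)(C) \rTo^{\cong} M(g)(C)\}$, establishing condition (2).

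With both conditions in hand, Proposition \ref{geom} shows the stack is geometric. The step I expect to be the real obstacle is the first: it rests on the fact that the Grothendieck topos $[NDL]$ admits a \emph{localic} open surjection from the topos of sheaves on a locale — without this the stage $Y$ need not be a locale, nor $q$ an effective descent morphism of locales — and it requires one to check carefully that the displayed isomorphisms are natural and take place among compact Hausdorff locales over $Y$. Condition (2), by contrast, is largely bookkeeping once one has recorded the stability under change of base of exponentiability of compact Hausdorff locales.
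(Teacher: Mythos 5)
Your proposal is correct and follows essentially the same route as the paper: condition (1) via a localic open surjective cover $Sh(G_0) \rTo [NDL]$, the completion functor $c$ of Proposition \ref{c} and its commutation with inverse images, and the pullback of classifying morphisms; and condition (2) via exponentials of the pulled-back generic compact Hausdorff locale over $G_0 \times G_0$. Your equalizer presentation of $G_1$ is just an explicit spelling-out of the paper's $Iso_{G_0 \times G_0}\bigl((\pi_2^*C)^{\pi_1^*C}\bigr)$, constructed there as a sublocale of $(\pi_2^*C)^{\pi_1^*C} \times (\pi_1^*C)^{\pi_2^*C}$.
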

\begin{proof}
We check 1. and 2. of Proposition \ref{geom}. 

For 1., let $l:Sh(G_0) \rTo $$ [ NDL ] $ be a localic cover of the classifying topos for $NDL$ which we can assume is an open surjection (e.g. Theorem C5.2.1 of \cite{Elephant}). Let $C$ be the compact Hausdorff locale corresponding to the completion of $l^*G_{NDL}$ where $G_{NDL}$ is the generic normal distributive lattice in $NDL$. So, $C = c_{Sh(G_0)}(l^*G_{NDL})$ using the notation of Proposition \ref{c}. Then for any compact Hausdorff locale $A$ in $Sh(X)$, $\mathcal{O}_X A$ is a normal distributive lattice and so is classified by a geometric morphism $k_A:Sh(X) \rTo $$[NDL]$. The pullback (in the category of bounded toposes) of $l$ along $k_A$ is localic and an open surjection so must be of the form $q:Sh(Y) \rTo Sh(X)$ where the locale map $q$ is an open surjection and so is of effective descent. (Recall open surjections between toposes are pullback stable; C3.1.27 of \cite{Elephant}.) We draw the pullback diagram of geometric morphisms
\begin{diagram}
Sh(Y_A) & \rTo^{x} & Sh(G_0) \\
\dTo^{q} &     & \dTo_l \\
Sh(X) & \rTo^{k_A} & [NDL]\\
\end{diagram}
to clarify which map is $q$ and can then use the following to complete our verification of condition (1) of Proposition \ref{geom}: 
\begin{eqnarray*}
q^*A & \cong & q^* (c_{Sh(X)}\mathcal{O}_{X}(A)) \\
& \cong & c_{Sh(Y)}q^*\mathcal{O}_X(A) \\
& \cong & c_{Sh(Y)}q^* k_A^* G_{NDL} \\
& \cong & c_{Sh(Y)}x^* l^* G_{NDL} \\
& \cong & x^*c_{Sh(G_0)}l^* G_{NDL} \\
& \cong & x^* C \text{.}\\
\end{eqnarray*}
For (2) observe that as the locale $C$ is compact Hausdorff so is $\pi_i^*C$ where $\pi_i: G_0 \times G_0 \rTo G_0$ for $i = 1,2$; compact Hausdorff locales are locally compact and so are exponentiable. Define $G_1 = Iso_{G_0 \times G_0}((\pi_2^*C)^{\pi_1^*C})$; the exponentiation is in the category of locales over $G_0 \times G_0$ and $Iso_{G_0 \times G_0}( \_)$ indicates taking the sublocale of isomorphisms (explicitly, this is constructed as a sublocale of $(\pi_2^*C)^{\pi_1^*C} \times (\pi_1^*C)^{\pi_2^*C}$).
\end{proof}

\section{Sierpi\'{n}ski homotopies}
\begin{definition}
Given a localic groupoid $\mathbb{G}$, if $P_1$ and $P_2$ are two principal $\mathbb{G}$-bundles over $X$ (for some locale $X$) then a \emph{$\mathbb{S}$-homotopy from $P_1$ to $P_2$} consists of a principal $\mathbb{G}$-bundle $Q$ over $\mathbb{S} \times X$ and two isomorphisms, $P_1 \cong (0_{\mathbb{S}} !^X,Id_X)^*Q$ and $P_2 \cong (1_{\mathbb{S}} !^X,Id_X)^*Q$ where $0_{\mathbb{S}},1_{\mathbb{S}}: 1 \pile{\rTo \\ \rTo} \mathbb{S}$ are the bottom and top of $\mathbb{S}$. 
\end{definition}

In good cases principal $\mathbb{G}$-bundles and $\mathbb{S}$-homotopies between them form a category, but this is not always the case. In particular the composition of two Sierpi\'{n}ski homotopies cannot be defined in general, but one can make sense of ``a composition'' of two homotopies by using the $n$-points version of the Sierpi\'{n}ski locale.

Let $\mathbb{S}_n$ be the $n$-point Sierpinski locale; that is, the locale such that $Sh(\mathbb{S}_n) \simeq \mathbf{Set}^{[n]}$, where $[n]$ is the category $ 0 \to 1 \to \dots \to n$. For any topos $\mathcal{T}$ geometric morphisms $\mathbb{S}_n \to \mathcal{T}$ are the same as a series of points of $\mathcal{T}$ and maps between them $p_0 \to \dots \to p_n$. The full subcategory of $\mathfrak{Cat}$ on the $[n]$ is the simplicial category $\Delta$, so the construction above defines a functor $\Delta \to \mathbf{Loc}$, where a functor $f:[n] \to [m]$ induces a geometric morphism $\mathbb{S}_n \to \mathbb{S}_m$ whose inverse image functor $f^* : \mathbf{Set}^{[m]} \to \mathbf{Set}^{[n]} $ is composition with $f$. For any locale $X$, we have a groupoid:

\[  Prin^\Delta_{\mathbb{G}}(X)([n]) := Prin_{\mathbb{G}}(\mathbb{S}_n \times X).  \]

As this is natural in $[n]$, we have define a simplicial groupoid $Prin^\Delta_{\mathbb{G}}(X)$; that is, a functor from $\Delta^{op}$ to the category of groupoids.

\begin{definition}
For a category $\mathcal{C}$, we define a simplicial groupoid $N^{gpd}(\mathcal{C})$, such that $N^{gpd}(\mathcal{C})([n])$ is the groupoid of functors $[n] \to \mathcal{C}$ with natural isomorphisms between them.
\end{definition}

The following is a well known result for $\infty$-categories which immediately reduces to a theorem about ordinary categories (and for which it is possible to give a direct proof without going through the theory of $\infty$-categories):

\begin{proposition} $N^{gpd}$ is a fully faithful functor from the $2$-category of categories, functors and natural isomorphisms, to the $2$-category of simplicial groupoids, pseudo-natrual transformations and pseudo-natural modifications. \end{proposition}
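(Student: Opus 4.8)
The plan is to prove this by hand at the level of ordinary categories, avoiding any appeal to quasi-category machinery. First I would unwind what the two $2$-categories are and what a functor between them must do on objects, $1$-cells and $2$-cells. On objects, $N^{gpd}$ sends a category $\mathcal{C}$ to the simplicial groupoid $[n] \mapsto \mathrm{Fun}([n],\mathcal{C})_{\cong}$ (functors and natural isomorphisms); I must check this is genuinely functorial in $[n]$, which is immediate since precomposition with $f\colon [m]\to[n]$ carries functors to functors and natural isomorphisms to natural isomorphisms. On $1$-cells a functor $F\colon \mathcal{C}\to\mathcal{D}$ induces postcomposition, giving a map of simplicial groupoids; on $2$-cells a natural isomorphism $\alpha\colon F\Rightarrow G$ induces, at each level $[n]$, a natural isomorphism of functors $\mathrm{Fun}([n],\mathcal{C})_{\cong}\to\mathrm{Fun}([n],\mathcal{D})_{\cong}$, whose component at a functor $c\colon [n]\to\mathcal{C}$ is $\alpha c\colon Fc\Rightarrow Gc$; naturality in $[n]$ is routine. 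So $N^{gpd}$ is well defined as a $2$-functor, and I would dispatch this part quickly.

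For \emph{faithfulness on $2$-cells}, suppose $\alpha,\beta\colon F\Rightarrow G$ induce equal modifications. Looking at level $[0]$ recovers, for each object $c\in\mathcal{C}$ viewed as a functor $[0]\to\mathcal{C}$, the components $\alpha_c$ and $\beta_c$; equality of the induced modifications at level $0$ already forces $\alpha=\beta$. For \emph{fullness on $2$-cells}, given a pseudo-natural modification $\Theta$ between the simplicial maps $N^{gpd}(F)$ and $N^{gpd}(G)$, I read off at level $[0]$ a natural isomorphism $\alpha\colon F\Rightarrow G$ (its component at $c$ is $\Theta_{[0]}$ evaluated at the object $c\colon[0]\to\mathcal{C}$); then one checks that the level-$[n]$ component of $\Theta$ is forced to be $\alpha(-)$ by compatibility with the two face maps $[0]\rightrightarrows [1]$ and, inductively, with all face maps $[0]\to[n]$ picking out the objects in the image of a functor $[n]\to\mathcal{C}$ — here one uses that a natural transformation between functors out of $[n]$ is determined by its components at the $n+1$ objects. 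This recovers $\Theta$ from $\alpha$, giving fullness.

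For \emph{fullness and faithfulness on $1$-cells} the argument is the same in spirit but now exploiting the morphisms of $[n]$ rather than just objects. Faithfulness: a simplicial map $N^{gpd}(\mathcal C)\to N^{gpd}(\mathcal D)$ determines, at level $[0]$, the object function and, at level $[1]$, the morphism function of the underlying functor, and two functors inducing the same simplicial map agree on objects and morphisms, hence are equal. Fullness is the crux: given a pseudo-natural transformation $\Phi\colon N^{gpd}(\mathcal C)\to N^{gpd}(\mathcal D)$, I define $F$ on objects via $\Phi_{[0]}$ and on morphisms via $\Phi_{[1]}$ (a morphism of $\mathcal{C}$ is a functor $[1]\to\mathcal{C}$), then verify $F$ preserves identities and composition by using the simplicial identities relating the degeneracy $[1]\to[0]$ and the face maps $[2]\rightrightarrows[1]$, together with the (iso) coherence data of $\Phi$; finally one must check $\Phi$ is recovered from $F$ at every level $[n]$, which again follows because a functor $[n]\to\mathcal{C}$ is determined by its values on the generating morphisms $i\to i+1$, i.e. by a sequence of $1$-simplices glued along $0$-simplices, and $\Phi$ is compatible with that gluing up to the coherence isomorphisms.

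The main obstacle I expect is bookkeeping of the pseudo-naturality coherence isomorphisms: $\Phi$ and $\Theta$ are only \emph{pseudo}-natural, so the squares commute up to specified invertible $2$-cells satisfying cocycle/hexagon conditions, and I must check that when I reassemble a functor or natural transformation from its level-$0$ and level-$1$ data these coherence $2$-cells are forced to be identities (or at least forced to be the canonical ones), so that the reconstructed $F$ or $\alpha$ is strict and genuinely maps to $\Phi$ or $\Theta$ under $N^{gpd}$. This is where the groupoid (rather than category) structure of each $N^{gpd}(\mathcal{C})([n])$ is used: all the $2$-cells in sight are invertible, which is what lets the coherence data be pinned down. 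Everything else is a diagram chase over the low-dimensional simplices $[0],[1],[2]$ and the standard simplicial identities.
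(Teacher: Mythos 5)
The paper itself offers no argument for this proposition (it is quoted as a well-known fact that reduces to ordinary category theory), so a direct elementary proof like yours is exactly the kind of thing that is being implicitly invoked, and your reduction to the low-dimensional simplices $[0],[1],[2]$ is the standard skeleton of such a proof. However, there is one genuine problem in how you formulate the goal of the fullness-on-$1$-cells step. You say you will check that the pseudo-naturality coherence $2$-cells of a given $\Phi\colon N^{gpd}(\mathcal{C})\to N^{gpd}(\mathcal{D})$ are ``forced to be identities'' so that the reconstructed functor $F$ satisfies $N^{gpd}(F)=\Phi$ on the nose. This is false in general: a pseudo-natural transformation can have non-identity constraint cells (for instance, conjugate each level of $N^{gpd}(F)$ by a family of natural automorphisms of $F$ indexed by $[n]$, absorbing the discrepancies into the constraint cells), and such a $\Phi$ is not strictly equal to $N^{gpd}(H)$ for any functor $H$. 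Since the target $2$-category has pseudo-natural transformations as $1$-cells, ``fully faithful'' here must be read bicategorically: the induced functor on hom-categories, from functors $\mathcal{C}\to\mathcal{D}$ with natural isomorphisms to pseudo-natural transformations with modifications, is an \emph{equivalence}. So the correct target of your step is not $N^{gpd}(F)=\Phi$ but an invertible modification $N^{gpd}(F)\cong\Phi$, whose level-$[n]$ component at a functor $c\colon[n]\to\mathcal{C}$ is assembled from the pseudo-naturality constraints of $\Phi$ over the vertex inclusions $[0]\to[n]$, with the edge inclusions $[1]\to[n]$ (and the coherence axioms for $\Phi$) used to verify that these isomorphisms are natural in $[n]$ and in $c$. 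With that reformulation your reconstruction of $F$ from $\Phi_{[0]}$, $\Phi_{[1]}$ and the faces of $[2]$ goes through; insisting on strictness would make the step fail.

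A smaller point: in the $2$-cell analysis, the level-$[0]$ component of a modification only gives you a family $\alpha_c\colon F(c)\to G(c)$ natural with respect to \emph{isomorphisms} of $\mathcal{C}$ (since $N^{gpd}(\mathcal{C})([0])=\mathcal{C}_{\cong}$). Naturality of $\alpha$ against arbitrary morphisms of $\mathcal{C}$ has to be extracted from the level-$[1]$ component of the modification together with its compatibility with the two vertex maps $[0]\to[1]$: an object of $N^{gpd}(\mathcal{C})([1])$ is a morphism $u\colon c\to c'$, and the corresponding square is exactly the naturality square $\alpha_{c'}\circ Fu=Gu\circ\alpha_c$. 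You do invoke the face maps, but you should make explicit that this is where full naturality of $\alpha$ comes from, not just the determination of the higher components. With these two repairs the plan is sound.
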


\begin{remark}
  While we will not need it, the essential image of $N^{gpd}$ can be characterized explicitly as those simplicial groupoids $X:\Delta^{op} \to \mathfrak{GPD}$ that satisfy the following three conditions:

  \begin{itemize}
  \item $X$ satisfies the Segal condition; that is, for each $n>1$, is the map

\[ X([n]) \to X([1]) \times_{X([0])} X([1]) \times_{X([0])}  \dots \times_{X([0])} X_1  \]

induced by the maps $[1] \simeq \{i,i+1\} \to [n]$ is an equivalence of groupoids (where the pullbacks are pseudo-pullbacks).

\item $X$ satisfies the Rezk (or completeness) condition. That is, if we define $X^{iso} \subset X([1])$ the full subgroupoid of the  $x \in X([1])$ that ``admit an inverse'' in the sense that there is an element $y \in X([2])$, such that the image of $y$ under $[1] \simeq \{0,1\} \subset [2]$ is (isomorphic to) $x$ and the image of $y$ under $[1] \simeq \{0,2\} \subset [2]$ is (isomorphic to) a degenerate object (an object in the image of $X([0]) \to X([1])$). Then the natural map $X([0]) \to X^{iso}$ induced by $X([0]) \to X([1])$ is an equivalence of groupoids.

  \item The functor $X([n]) \to X([0])^{n+1}$ induced by all the maps $[0] \to [n]$ is fully faithful.

  \end{itemize}

  Indeed the first two conditions essentially correspond to the definition of complete Segal spaces (or Rezk spaces) which (when used in the setting of $\infty$-categories and $\infty$-groupoids) are a way to define $(\infty,1)$-categories. The last condition is important to make sure that the object we get is really a $1$-category and not some special  $(\infty,1)$-category.
\end{remark}

We will abuse language and say that a simplicial groupoid ``is a category'' when it is in the essential image of $N^{gpd}$ and will identify this simplicial groupoid with the corresponding category.

\begin{theorem}
There exists a localic groupoid $\mathbb{G}_{KH}$ such that, naturally in locales $X$, 
\begin{eqnarray*}
\mathbf{KHaus}^{op}_X \simeq Prin^{\Delta}_{\mathbb{G}_{KH}}(X) \text{.}
\end{eqnarray*}
In particular, $Prin^{\Delta}_{\mathbb{G}_{KH}}(X)$ is a category for all locales $X$.

\end{theorem}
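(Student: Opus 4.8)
The plan is to combine Proposition \ref{main} with the formal machinery of $N^{gpd}$. By Proposition \ref{main} we already have a localic groupoid $\mathbb{G}_{KH}$ (the one constructed there with object-of-objects $G_0$ and object-of-arrows $G_1 = Iso_{G_0 \times G_0}((\pi_2^*C)^{\pi_1^*C})$) such that $\mathbf{KHLoc}^{\cong}_X \simeq Prin_{\mathbb{G}_{KH}}(X)$ naturally in $X$. Since $\mathbf{KHLoc}_X$ is precisely $\mathbf{KHaus}$ computed internally in $Sh(X)$, we may as well write this as $(\mathbf{KHaus}^{op}_X)^{\cong} \simeq Prin_{\mathbb{G}_{KH}}(X)$. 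The point of the theorem is to upgrade this equivalence of \emph{groupoids} to an equivalence of \emph{categories} by feeding both sides through the simplicial-nerve construction $N^{gpd}$, and to recognise that $Prin^{\Delta}_{\mathbb{G}_{KH}}(X)$ is exactly $N^{gpd}$ applied to $\mathbf{KHaus}^{op}_X$.

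First I would check that for any locale $X$ the simplicial groupoid $[n] \mapsto Prin_{\mathbb{G}_{KH}}(\mathbb{S}_n \times X)$ is naturally isomorphic to $N^{gpd}(\mathbf{KHaus}^{op}_X)$. To do this I apply the natural equivalence of Proposition \ref{main} at the locale $\mathbb{S}_n \times X$ in place of $X$, giving $Prin_{\mathbb{G}_{KH}}(\mathbb{S}_n \times X) \simeq \mathbf{KHLoc}^{\cong}_{\mathbb{S}_n \times X}$. Now I need to identify $\mathbf{KHLoc}^{\cong}_{\mathbb{S}_n \times X}$, i.e. the groupoid of compact Hausdorff locales in $Sh(\mathbb{S}_n \times X) \simeq Sh(X)^{[n]}$, with the groupoid of functors $[n] \to \mathbf{KHaus}^{op}_X$ and natural isomorphisms. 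This is where the structural fact about $\mathbb{S}_n$ enters: since $Sh(\mathbb{S}_n) \simeq \mathbf{Set}^{[n]}$, a sheaf of frames over $\mathbb{S}_n \times X$ is the same as a diagram $[n]^{op} \to (\text{frames over } X)$, and the compact Hausdorff condition is checked pointwise (the three structure maps $d_0, d_1, m$ of compactness/Hausdorffness of the diagonal are preserved and reflected by the points $[0] \to [n]$, using that proper and open maps are pullback stable and descend — Proposition \ref{prop:Proper_descend}). Equivalently, one can argue via the pseudo-functoriality of $\mathbf{KHLoc}$ and the fact that the $n+1$ points of $\mathbb{S}_n$ jointly cover it, so that an object over $\mathbb{S}_n \times X$ is a descent datum which, because the relevant category of covers is the poset $[n]$, unwinds precisely to a composable string of morphisms in $\mathbf{KHaus}^{op}_X$. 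Carrying this out simplicially — i.e. checking it commutes with the face and degeneracy maps induced by $f : [n] \to [m]$ — gives the desired isomorphism of simplicial groupoids $Prin^{\Delta}_{\mathbb{G}_{KH}}(X) \cong N^{gpd}(\mathbf{KHaus}^{op}_X)$, natural in $X$.

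Once that identification is in hand, the theorem follows: by the Proposition on $N^{gpd}$ (its full faithfulness, but here we only need that $N^{gpd}(\mathcal{C})$ is a simplicial groupoid "which is a category", namely lies in the essential image of $N^{gpd}$ and is identified with $\mathcal{C}$), the simplicial groupoid $Prin^{\Delta}_{\mathbb{G}_{KH}}(X)$ "is a category" and that category is (canonically equivalent to) $\mathbf{KHaus}^{op}_X$. Naturality in $X$ is inherited from the naturality of the equivalence in Proposition \ref{main} together with the naturality of the identification in the previous paragraph; since both are natural, the composite $2$-natural equivalence $\mathbf{KHaus}^{op}_X \simeq Prin^{\Delta}_{\mathbb{G}_{KH}}(X)$ holds at the level of the $2$-functor sending $X$ to the corresponding (simplicial) object. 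The "in particular" clause is then automatic.

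The main obstacle I anticipate is the pointwise-over-$\mathbb{S}_n$ analysis in the second paragraph: one must be careful that "compact Hausdorff locale in $Sh(X)^{[n]}$" genuinely means "a functor $[n] \to \mathbf{KHaus}^{op}_X$" and not merely "a presheaf of compact Hausdorff locales" — that is, that the transition maps of the diagram really are locale maps in $\mathbf{KHaus}^{op}_X$ with no extra coherence or exactness conditions imposed, and that isomorphisms of such diagrams are exactly natural isomorphisms. The cleanest way to handle this is to use $Sh(\mathbb{S}_n) \simeq \mathbf{Set}^{[n]}$ directly: a locale internal to a presheaf topos $\mathcal{E}^{[n]}$ is a diagram of locales internal to $\mathcal{E}$ over $[n]^{op}$, and since $[n]$ is a poset all higher coherences are trivial, so the only real content is that "compact Hausdorff" is a pointwise condition — which is exactly Proposition \ref{prop:Proper_descend} applied along the (effective descent) cover $\coprod_{i=0}^n 1 \to \mathbb{S}_n$, or more directly the observation that properness of a diagram of maps in a presheaf topos is checked objectwise. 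A secondary, more bookkeeping-heavy obstacle is verifying that all the identifications are simplicially natural, i.e. compatible with the maps induced by arbitrary $f : [n] \to [m]$ in $\Delta$; this is routine but needs the functoriality statement in Proposition \ref{c}(ii) to know that the completion functor $c$ and the equivalence of Proposition \ref{main} commute with the relevant pullback/inverse-image functors $\mathbb{S}_n \times X \to \mathbb{S}_m \times X$.
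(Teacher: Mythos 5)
Your overall skeleton is the same as the paper's: apply Proposition \ref{main} at the locale $\mathbb{S}_n \times X$ to identify $Prin^{\Delta}_{\mathbb{G}_{KH}}(X)([n])$ with the core of $\mathbf{KHaus}_{\mathbb{S}_n\times X}$, and then use the $N^{gpd}$ machinery to conclude. The gap is exactly at the step you flag as the main obstacle: identifying $\mathbf{KHaus}_{\mathbb{S}_n\times X}$ with functors $[n]^{op}\to\mathbf{KHaus}_X$. Your justification --- that a locale internal to a presheaf topos $\mathcal{E}^{[n]}$ is just an $[n]^{op}$-indexed diagram of locales internal to $\mathcal{E}$, with compact Hausdorffness checked pointwise --- is false for general locales: $\mathbf{Loc}_{Sh(\mathbb{S}_n\times X)}\simeq\mathbf{Loc}/(\mathbb{S}_n\times X)$ is not the category of $[n]^{op}$-diagrams in $\mathbf{Loc}/X$ (already for $n=1$ and $X=1$, an object of $\mathbf{Loc}/\mathbb{S}$ is a locale with a chosen open sublocale, which is strictly more data than a single map between two locales; an internal frame in a presheaf topos is not the same thing as a presheaf of frames, because internal completeness is not a pointwise condition). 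That the identification \emph{does} hold after restricting to compact Hausdorff locales, i.e. $[\mathcal{C},\mathbf{KHaus}]\simeq\mathbf{KHaus}_{\hat{\mathcal{C}}}$ constructively and naturally (hence relative to $Sh(X)$), is precisely the main theorem of \cite{HenryTow}, and that theorem is the external input the paper's proof invokes; it does not follow from Proposition \ref{prop:Proper_descend} or from pullback stability of proper maps.

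Your proposed fallback also fails concretely: the map $\coprod_{i=0}^{n} 1\to\mathbb{S}_n$ is not an effective descent morphism, and cannot be, because distinct points of $\mathbb{S}_n$ are disjoint sublocales, so the kernel pair of this map is trivial and descent data along it amounts to a bare $(n+1)$-tuple of locales over $X$ with no connecting maps; if it were of effective descent one would conclude $\mathbf{Loc}/\mathbb{S}_n\simeq\mathbf{Loc}^{\,n+1}$, which is absurd. In other words, the morphism data of the diagram $[n]^{op}\to\mathbf{KHaus}_X$ is exactly what a pointwise/descent argument along the points would lose. To repair the proof, replace your second paragraph by an appeal to (or a proof of) the theorem of \cite{HenryTow} applied with $\mathcal{C}=\{0\leq\dots\leq n\}$ internally to $Sh(X)$; with that in place, your remarks on simplicial naturality, naturality in $X$, and the use of $N^{gpd}$ go through essentially as in the paper.
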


To be clear, what we mean here is that $N^{gpd}\left(\mathbf{KHaus}^{op}_X \right) \simeq Prin^{\Delta}_{\mathbb{G}_{KH}}(X)$.

\begin{proof}
  In \cite{HenryTow} it is shown that for any category $\mathcal{C}$, $[\mathcal{C},\mathbf{KHaus}] \simeq \mathbf{KHaus}_{\hat{\mathcal{C}}}$ where $\hat{\mathcal{C}}$ is the topos of presheaves on $\mathcal{C}$. The account in \cite{HenryTow} is constructive and natural in $\mathcal{C}$. So it can be carried out relative to $Sh(X)$ for any locale $X$; applying it to the case $\mathcal{C} = \{ 0 \leq 1 \leq \dots \leq n \}$ we have an equivalence of categories
  \[ [ \{ 0 \leq 1 \leq \dots \leq n \}^{op},\mathbf{KHaus_X}] \simeq \mathbf{KHaus}_{\mathbb{S}_n \times X} \simeq Prin^{\Delta}_{\mathbb{G}_{KH}}(X)([n]),\]
In particular, restricting to invertible arrows on both sides, we obtain exactly that $Prin^{\Delta}_{\mathbb{G}_{KH}}(X)$  identifies with the groupoid nerve of the opposite category of $\mathbf{KHaus_X}$ .    
\end{proof}

To show that this Theorem is the compact Hausdorff dual of the well known fact that there is a classifying localic groupoid for the geometric theory of objects, we re-state this well know result in the following form: 
\begin{theorem}
There exists a localic groupoid $\mathbb{G}_{Dis}$ such that, naturally in locales $X$, 
\begin{eqnarray*}
\mathbf{Dis}_X \simeq Prin^{\Delta}_{\mathbb{G}_{Dis}}(X) \text{.}
\end{eqnarray*}
\end{theorem}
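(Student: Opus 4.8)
The plan is to obtain $\mathbb{G}_{Dis}$ exactly as $\mathbb{G}_{KH}$ was obtained, but starting from the stack $X \mapsto \mathbf{Dis}_X^{\cong}$ in place of $X \mapsto \mathbf{KHLoc}_X^{\cong}$. The overall architecture is the same: first show that $X \mapsto \mathbf{Dis}_X$ is a stack (this is already an example in the excerpt, using that open maps descend along effective descent morphisms, Proposition \ref{prop:Proper_descend} in its open-map form, together with $Sh(X) \simeq \mathbf{Dis}_X$); then pass to the stack of groupoids $X \mapsto \mathbf{Dis}_X^{\cong}$; then verify conditions (1) and (2) of Proposition \ref{geom} to produce a localic groupoid $\mathbb{G}_{Dis}$ with $\mathbf{Dis}_X^{\cong} \simeq Prin_{\mathbb{G}_{Dis}}(X)$; and finally bootstrap from bundles to $\mathbb{S}$-homotopies using the equivalence $[\mathcal{C}, \mathbf{Set}] \simeq \mathbf{Set}_{\hat{\mathcal{C}}}$ (the discrete analogue of the $[\mathcal{C},\mathbf{KHaus}]$-result of \cite{HenryTow}, which here is just the elementary fact that sheaves on $\mathbb{S}_n \times X$ are presheaves on $[n]$ valued in $Sh(X)$, i.e. diagrams of shape $[n]$ of discrete locales) and the fully faithfulness of $N^{gpd}$.

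For condition (1), the role played before by the classifying topos $[NDL]$ and the completion functor $c$ is now played by the object classifier: let $[\mathbb{O}]$ be the classifying topos for the theory of objects, with generic object $G_{\mathbb{O}}$, and let $l : Sh(G_0) \rTo [\mathbb{O}]$ be an open surjective localic cover (such exists by Theorem C5.2.1 of \cite{Elephant}). Take $C$ to be the discrete locale in $Sh(G_0)$ corresponding to $l^*G_{\mathbb{O}}$. Given any discrete locale $A$ in $Sh(X)$, equivalently an object of $Sh(X)$, it is classified by a geometric morphism $k_A : Sh(X) \rTo [\mathbb{O}]$ with $k_A^* G_{\mathbb{O}} \cong A$; pulling $l$ back along $k_A$ yields an open surjection $q : Sh(Y_A) \rTo Sh(X)$ (open surjections of toposes are pullback-stable, C3.1.27 of \cite{Elephant}) which is therefore of effective descent, together with $x : Sh(Y_A) \rTo Sh(G_0)$, and the chain $q^*A \cong q^* k_A^* G_{\mathbb{O}} \cong x^* l^* G_{\mathbb{O}} \cong x^* C$ (now using only that $f^*$ commutes with the inverse-image description of discrete locales, which is automatic) gives condition (1). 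For condition (2), exactly as in Proposition \ref{main} one needs a locale $G_1$ classifying isomorphisms between the two pullbacks $\pi_1^*C, \pi_2^*C$ of $C$ to $G_0 \times G_0$; here $\pi_i^*C$ is a discrete, hence locally compact, hence exponentiable locale over $G_0 \times G_0$, so one sets $G_1 = Iso_{G_0 \times G_0}\bigl((\pi_2^*C)^{\pi_1^*C}\bigr)$ just as before.

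With $\mathbb{G}_{Dis}$ in hand, the final step mirrors the proof of the previous theorem: relative to $Sh(X)$, apply the equivalence $[\{0 \leq 1 \leq \dots \leq n\}^{op}, \mathbf{Dis}_X] \simeq \mathbf{Dis}_{\mathbb{S}_n \times X} \simeq Prin^{\Delta}_{\mathbb{G}_{Dis}}(X)([n])$ (the middle equivalence being $Sh(\mathbb{S}_n \times X) \simeq [n]$-indexed diagrams in $Sh(X)$, the right one being the geometricity of $\mathbb{G}_{Dis}$), restrict to invertible $2$-cells on each side, and conclude that $Prin^{\Delta}_{\mathbb{G}_{Dis}}(X)$ is the groupoid nerve of $\mathbf{Dis}_X$, i.e. $N^{gpd}(\mathbf{Dis}_X) \simeq Prin^{\Delta}_{\mathbb{G}_{Dis}}(X)$ naturally in $X$.

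I do not expect any genuinely hard obstacle, since every ingredient is strictly simpler than in the compact Hausdorff case — "completion of the generic NDL" degenerates to "the generic object", and $[\mathcal{C},\mathbf{KHaus}]\simeq\mathbf{KHaus}_{\hat{\mathcal{C}}}$ degenerates to the elementary fact about presheaf toposes. The only points requiring any care are the naturality of all the equivalences in $X$ (which follows because each construction is made from geometric/descent-theoretic data that is stable under pullback) and the bookkeeping that $\mathbb{G}_{Dis}$, so constructed, is étale-complete and that this recovers the classical "classifying groupoid for the theory of objects" referenced in the Introduction; but for the statement as given, only the displayed natural equivalence needs to be produced, and the argument above does that.
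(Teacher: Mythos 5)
Your proposal is correct and is essentially the paper's own argument: the paper cites Corollary 5.2 of \cite{BDesc} for the bundle-level statement but explicitly offers, as an alternative, exactly your route (rerun the machinery of Proposition \ref{main} with the geometric theory of a single object in place of $NDL$, using that discrete locales are locally compact hence exponentiable for the construction of $G_1$), and it handles the $\mathbb{S}_n$-homotopy step the same way you do, via $Sh(\mathbb{S}_n \times X) \simeq [\{0 \leq 1 \leq \dots \leq n\}, Sh(X)]$ and restriction to invertible morphisms to identify $Prin^{\Delta}_{\mathbb{G}_{Dis}}(X)$ with the groupoid nerve of $\mathbf{Dis}_X$. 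The only cosmetic slip is the $(-)^{op}$ in your $[\{0 \leq 1 \leq \dots \leq n\}^{op}, \mathbf{Dis}_X]$: sheaves on $\mathbb{S}_n$ are covariant $[n]$-diagrams, so no opposite is needed here, which is precisely why the discrete theorem asserts $\mathbf{Dis}_X \simeq Prin^{\Delta}_{\mathbb{G}_{Dis}}(X)$ rather than involving $\mathbf{Dis}_X^{op}$ as in the compact Hausdorff case.
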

\begin{proof}
To find $\mathbb{G}_{Dis}$ the easiest reference is Corollary 5.2 of \cite{BDesc} from which we can establish $\mathbf{Dis}^{\cong}_X \simeq Prin_{\mathbb{G}_{Dis}}(X)$ by considering the case of the object classifier. We can also reach this conclusion by exploiting the same reasoning as used above to find $\mathbb{G}_{KH}$; rather than $NDL$, use the geometric theory consisting of a single object. Discrete locales are locally compact and so are exponentiable allowing the last step of the proof of Proposition \ref{main} to work. 
As for the correspondence between morphisms in $\mathbf{Dis}$ and $\mathbb{S}$-homotopies, this is clear from Lemma B4.2.3 of \cite{Elephant} given that $Sh(\mathbb{S} \times X) \simeq [ \{ 0 \leq 1 \}, Sh(X) ]$ (take $\mathbb{T}$ to be the object theory $\mathbb{O}$), and this easily generalizes to $Sh(\mathbb{S}_n \times X) \simeq [ \{ 0 \leq 1 \leq \dots \leq n \}, Sh(X) ]$ showing that we have an equivalence of categories

\[ Prin^{\Delta}_{\mathbb{G}_{Dis}}(X)([n]) \simeq [ \{ 0 \leq 1 \leq \dots \leq n \}, \mathbf{Dis}_X ].\]

After restricting to invertible morphisms on both sides we see that $Prin^{\Delta}_{\mathbb{G}_{Dis}}(X)$ is the nerve of $\mathbf{Dis}_X$.
\end{proof}

\end{document}